\def \Re{\mbox{${\mathcal Re}$}}
\def \r{\mbox{${\mathbb R}$}}
\def \C{\mbox{${\mathbb C}$}}
\def \h{\mbox{${\mathbb H}$}}
\def \E {\mathbb{E}}
\def \L {\mathbb{L}}
\def \K {\mathbb{K}}
\newtheorem{thm}{Theorem}[section]
\newtheorem{lem}[thm]{Lemma}
\theoremstyle{remark}
\newtheorem{rem}[thm]{Remark}
\theoremstyle{definition}
\newtheorem{dfn}[thm]{Definition}
\newtheorem{ex}[thm]{Example}
\numberwithin{equation}{section} \numberwithin{thm}{section}
\begin{document}

\subjclass{53A10, 53C42, 53C50}

\keywords{Minimal surfaces, Weierstrass representation, Bj\"{o}rling
problem, Lorentzian manifold.}

\title{The Bj\"{o}rling problem for minimal surfaces in a Lorentzian three-dimensional Lie group}
\author{Adriana A. Cintra}
\address{Departamento de Matem\'{a}tica, C.P. 6065\\
IMECC, UNICAMP, 13081-970, Campinas, SP\\ Brasil}

\email{ra099847@ime.unicamp.br}

\author{Francesco Mercuri}
\address{Departamento de Matem\'{a}tica, C.P. 6065\\
IMECC, UNICAMP, 13081-970, Campinas, SP\\ Brasil}

\email{mercuri@ime.unicamp.br}

\author{Irene I. Onnis}

\address{Departamento de Matem\'{a}tica, C.P. 668\\ ICMC,
USP, 13560-970, S\~{a}o Carlos, SP\\ Brasil}

\email{onnis@icmc.usp.br}

\thanks{Work partially supported by Capes and CNPq}

\begin{abstract}
In this paper we will show the existence and uniqueness of the solution of the Bj\"{o}rling problem for minimal surfaces in a 3-dimensional Lorentzian Lie group.
\end{abstract}

\maketitle

\section{Introduction}

The  Weierstrass representation formula for minimal surfaces in
$\r^3$ has been a fundamental tool for producing examples and
proving general properties of such surfaces, since the surfaces can be
parametrized by holomorphic data. In \cite{MMP} the authors describe
a general Weierstrass representation formula for simply connected minimal surfaces in
an arbitrary Riemannian manifold. The partial differential equations
involved are, in general, too complicated to be solved explicitly.
However, for particular ambient 3-manifolds, such as the Heisenberg
group, the hyperbolic space and the product of the hyperbolic plane
with $\r$, the equations are more manageable and the formula can be
used to produce examples (see \cite{K}, \cite{MMP}).

In the Lorentz-Minkowski space $\L^3$, i.e. the affine three space $\mathbb{R}^3$ endowed with the Lorentzian metric
$$ds^2=dx_1^2+dx_2^2-dx_3^2,$$ a
Weierstrass representation type theorem was proved by Kobayashi \cite{Kob} for  spacelike minimal immersions
and by Konderak \cite{konderak} for the case of timelike minimal surfaces.
Recently, these theorems were extended for minimal surfaces in Riemannian and Lorentzian 3-dimensional manifolds
 by Lira, Melo and Mercuri (see \cite{Liramm}).

The aim of this paper is show how the Weierstrass representation formula can be used, if
the ambient manifold is a $3$-dimensional Lorentzian Lie group, in order to
prove existence and uniqueness of the solution of the Bj\"{o}rling
problem. We remember that the classical Bj\"{o}rling problem, proposed by Bj\"{o}rling (see \cite{B}) in 1844, asks for the construction of a minimal surface in $\r^3$ containing a given analytic curve $\beta$ with a given analytic unit normal $V$ along it. The problem was solved by H.A. Schwarz (see \cite{S}) in 1890 by means of an integral  formula in terms of $\beta$ and $V$. Some extensions of this problem in others ambient spaces have been proposed and solved in \cite{ACM, AV, CDM, MO}.

The paper is organized as follows. In Section~\ref{algL} we recall some basics facts of Lorentzian calculus, which play the role of complex calculus in the classical case, for timelike minimal surfaces. Section~\ref{weier} is devoted to present a Weierstrass type representation for minimal surfaces in Lorentzian 3-dimensional manifolds, following \cite{Liramm}. In Section~\ref{bjorling} we state and solve the Bj\"{o}rling problem for timelike and spacelike minimal surfaces in a Lorentzian 3-dimensional Lie group. For timelike minimal surfaces this is done by consideration  two different cases: when $\beta$ is a timelike curve we will call the corresponding problem the {\it timelike Bj\"{o}rling problem}, and when $\beta$ is a spacelike curve we will have the {\it spacelike Bj\"{o}rling problem}.

In Sections~\ref{lorentzianH}, \ref{sitter} and \ref{H2xR} we present some examples of minimal surfaces constructed via Bj\"{o}rling problem for the case in which the ambient manifold is the Heisenberg group $\h_3$, the de Sitter space $\mathbb{S}^3_1$ and the space $\mathbb{H}^2\times\mathbb{R}$, equipped with  left-invariant Lorentzian metrics.

\section{The algebra $\L$ of paracomplex numbers}\label{algL}

In \cite{konderak}, the author use paracomplex analysis to deduce a Weierstrass representation formula for timelike minimal surfaces in $\L^3$.

We recall that the algebra of paracomplex (or Lorentz) numbers is the algebra $$\L = \{a + \tau\, b\;|\; a,b \in \r\},$$ where $\tau$ is an imaginary
unit with $\tau^2 = 1$. The two internal operations are the obvious ones. We define the conjugation in $\L$ as $\overline{a + \tau\, b} := a - \tau\, b$ and the
$\L$-norm of $z = a + \tau b \in \L$ is defined by $$||z|| = |z\,\bar{z}|^{\frac{1}{2}} = |a^2 - b^2|^{\frac{1}{2}}.$$
The algebra $\L$ admits the set consisting of zero divisors $K = \{a\pm \tau\,a \in \L \;:\; a\neq 0\}$.
If $z\notin K\cup\{0\}$, then $z$ is invertible with inverse $\displaystyle z^{-1} = \bar{z}/(z\bar{z})$.

We have that $\L$ is isomorphic to the algebra $\r \oplus \r$ via the map:
$$
\rho(a + \tau\, b) = (a + b, a - b).
$$

The set $\L$ has a natural topology as a $2$-dimensional real vector space.
\begin{dfn}
Let $\Omega\subseteq \L$ be an open set and $z_0 \in \Omega$.
The $\L$-derivative of a function $f:\Omega\rightarrow\L$ at $z_0$ is defined by
\begin{equation}\label{eq:A11}
f'(z_0):= \lim_{z\rightarrow z_{0} \atop{z - z_{0} \in \L\setminus K\cup\{0\}}}\frac{f(z) - f(z_0)}{z - z_0},
\end{equation}
if the limit exists. If $f'(z_0)$ exists, we will say that $f$ is $\L$-differentiable
at $z_0$.
\end{dfn}

\begin{rem}
The condition of $\L$-differentiability is much less restrictive that the usual complex differentiability.
For example, $\L$-differentiability at $z_0$ does not imply continuity at $z_0$. However, $\L$-differentiability
in an open set $\Omega \subset \L$ implies usual differentiability in $\Omega$.
\end{rem}

Introducing the paracomplex operators:
\begin{equation}\label{dpc}
\frac{\partial}{\partial z} = \frac{1}{2}(\frac{\partial}{\partial u} + \tau\frac{\partial}{\partial v}),\qquad
\frac{\partial}{\partial \bar{z}} = \frac{1}{2}(\frac{\partial}{\partial u} - \tau\frac{\partial}{\partial v}),\nonumber
\end{equation}
where $z = u + \tau\, v$, we have that a differentiable function $f:\Omega\rightarrow\L$ is $\L$-differentiable if and only if
\begin{equation}\label{eqldif}
\displaystyle\frac{\partial f}{\partial \bar{z}} = 0.
\end{equation}
We observe that, writing  $f(u,v) = a(u,v) +\tau\, b(u,v)$, $u + \tau v\in\Omega$, the condition~\eqref{eqldif} is equivalent to the para-Cauchy-Riemann equations:
\begin{equation} \left\{\begin{aligned}\label{eq:A14}
\frac{\partial a}{\partial u} &= \frac{\partial b}{\partial v},\\
\frac{\partial a}{\partial v} &= \frac{\partial b}{\partial u},
\end{aligned}\right.
\end{equation}
whose integrability conditions are given by the wave equations
$$a_{uu} - a_{vv} = 0 = b_{uu} - b_{vv}.$$

\section{The Weierstrass representation formula in a Lorentzian 3-manifold}\label{weier}


 We will denote by $\K$ either the complex numbers $\C$ or the paracomplex numbers $\L$, and by $\Omega\subset \K$  an open set. Let $(M,g)$ be a Lorentzian 3-manifold and $f:\Omega\subset \K \rightarrow M$ a smooth conformal immersion. We endow $\Omega$ with the induced metric that makes $f$ an isometric immersion. We will say that $f$ is {\it spacelike} if the induced metric on $\Omega$, via $f$, is a Riemannian metric, and that
$f$ is {\it timelike} if the induced metric is a Lorentzian metric.

We observe that in the Lorentzian case, we can endow $\Omega$ with paracomplex isothermic coordinates and, as in the Riemannian case, they are locally described by complex isothermic charts with conformal changes of coordinates. We will denote by $z=u+i\,v$ (resp. $z=u+\tau\,v$) a complex (resp. paracomplex) isothermal coordinate in $\Omega$.

The metric $g$ may be extended for the $\E = f^\ast TM\otimes \K$ as:
\begin{itemize}
\item a (para)complex bilinear form $(.,.):\E\times\E\rightarrow\K$;
\item a (para)Hermitiana metric $\langle\langle.,.\rangle\rangle:\E\times\E\rightarrow\K$;
\end{itemize}
and the two extensions are related by: $$\langle\langle V,W\rangle\rangle = (V,\bar{W}).$$

\begin{thm}[Weierstrass Representation (\cite{Liramm})]\label{teo1}
Let $f:\Omega\subset \K \rightarrow M^3$ be a conformal minimal  spacelike  (or timelike) immersion
and $g = (g_{ij})$ be the induced metric. Define the (para)complex tangent vector $\phi \in \Gamma(f^\ast TM\otimes \K)$ by
\begin{equation}
\phi(z) = \frac{\partial f}{\partial z}\bigg{|}_{f(z)}=\sum_i \phi_i\frac{
\partial}{\partial x_i}.\nonumber
\end{equation}
Then  $\phi_j$, $j = 1,2,3$, satisfy the following conditions:
\begin{itemize}
\item[i)]$\langle\langle \phi,\phi\rangle\rangle  \neq 0,$
\item[ii)] $(\phi,\phi) = 0,$
\item[iii)] $\displaystyle\frac{\partial\phi_k}{\partial\bar{z}} + \sum_{i,j=1}^{3} \Gamma_{ij}^k \bar{\phi_i}\phi_j = 0$,
\end{itemize}
where $\{\Gamma_{ij}^k\}$ are the Christoffel symbols of $M$.
Conversely, if $\Omega\subset\K$ is a simply connected domain and $\phi_j:\Omega\to \K$, $j = 1,2,3$, are (para)complex functions satisfying the
conditions above, then the map
\begin{equation}
f:\Omega\to M,\qquad f_j(z) = 2\,\Re\int_{z_0}^{z} \phi_j\, dz,
\end{equation}
is a well-defined conformal spacelike (or timelike) minimal immersion (here $z_0$ is an arbitrary fixed point of $\Omega$ and the integral
is along any curve joining $z_0$\ to $z$).
\end{thm}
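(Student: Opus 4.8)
The plan is to establish the two implications separately, treating the complex case $\K=\C$ (with $i^2=-1$) and the paracomplex case $\K=\L$ (with $\tau^2=1$) in parallel, since the only difference between them is sign bookkeeping in the expansions. For the direct implication I would first unwind the operator $\partial/\partial z$ to write $\phi=\tfrac12(f_u-i\,f_v)$ in the complex case and $\phi=\tfrac12(f_u+\tau\,f_v)$ in the paracomplex case, where $f_u,f_v$ denote the coordinate derivatives. Expanding the $\K$-bilinear form and using $(f_u,f_v)=(f_v,f_u)$, the quantity $(\phi,\phi)$ becomes a combination of $(f_u,f_u)$, $(f_v,f_v)$ and $(f_u,f_v)$ whose vanishing is precisely the statement that $z$ is an isothermal coordinate; hence (ii) is equivalent to the conformality of $f$. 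Expanding instead $\langle\langle\phi,\phi\rangle\rangle=(\phi,\bar\phi)$ yields, up to a positive factor, $(f_u,f_u)\pm(f_v,f_v)$, which is nonzero exactly when the induced metric $g=(g_{ij})$ is nondegenerate, its sign distinguishing the spacelike from the timelike case; so (i) records that $f$ is an immersion of the asserted causal type. Finally, a direct computation identifies $\partial\phi_k/\partial\bar z+\sum_{i,j}\Gamma_{ij}^k\bar\phi_i\phi_j$, up to a nonzero conformal factor, with the $k$-th component of the tension field of $f$, the antisymmetric cross terms dropping out by the symmetry $\Gamma_{ij}^k=\Gamma_{ji}^k$; since a conformal immersion is minimal if and only if it is harmonic, condition (iii) is equivalent to minimality.

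For the converse, the first task is the well-definedness of $f_k(z)=2\,\Re\int_{z_0}^{z}\phi_k\,dz$. Writing $\phi_k=a_k+i\,b_k$ (resp. $a_k+\tau\,b_k$), the real part of the form $\phi_k\,dz$ is $a_k\,du-b_k\,dv$ (resp. $a_k\,du+b_k\,dv$), and its exterior derivative is a scalar multiple of the imaginary (resp. $\tau$-) component of $\partial\phi_k/\partial\bar z$. The crucial observation is that, by (iii), one has $\partial\phi_k/\partial\bar z=-\sum_{i,j}\Gamma_{ij}^k\bar\phi_i\phi_j$, and the right-hand side is invariant under conjugation: relabelling $i\leftrightarrow j$ and using $\Gamma_{ij}^k=\Gamma_{ji}^k$ gives $\overline{\sum_{i,j}\Gamma_{ij}^k\bar\phi_i\phi_j}=\sum_{i,j}\Gamma_{ij}^k\bar\phi_i\phi_j$. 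Hence $\partial\phi_k/\partial\bar z$ is real, its imaginary ($\tau$-) component vanishes, and $\Re(\phi_k\,dz)$ is a closed $1$-form; the simple connectivity of $\Omega$ then makes the path integral independent of the chosen path, so $f$ is well defined.

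It remains to check that the constructed $f$ has the desired properties. Differentiating the integral gives $df_k=\phi_k\,dz+\bar\phi_k\,d\bar z$, so that $\partial f/\partial z=\phi$, which lets me run the computations of the direct part in reverse: (ii) forces $z$ to be isothermal, i.e. $f$ conformal; (i) guarantees that the induced metric is nondegenerate, so $f$ is a genuine immersion, spacelike or timelike according to the sign in (i); and (iii), now read as the tension-field equation for the map $f$ just produced, says exactly that $f$ is harmonic, hence minimal. I expect the main obstacle to be the conceptual circularity of the converse: the Christoffel symbols in (iii) are evaluated along the very map one is constructing, so the well-definedness of $f$ and the identity $\phi=\partial f/\partial z$ must be secured together, and it is precisely the conjugation-invariance of $\sum_{i,j}\Gamma_{ij}^k\bar\phi_i\phi_j$ that breaks this circularity.
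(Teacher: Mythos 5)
The paper contains no proof of Theorem~\ref{teo1}: it is quoted from \cite{Liramm}, so your attempt can only be measured against the standard argument in that reference, and it does follow it in all essentials. The identifications (ii) $\leftrightarrow$ conformality, (i) $\leftrightarrow$ nondegeneracy (and causal type) of the induced metric, and (iii) $\leftrightarrow$ vanishing of the tension field --- with the antisymmetric part of $\bar{\phi}_i\phi_j$ killed by the symmetry $\Gamma^k_{ij}=\Gamma^k_{ji}$, and minimality of a conformal immersion equivalent to harmonicity --- form exactly the right skeleton; and your key step for the converse, namely that conjugation-invariance of $\sum_{i,j}\Gamma^k_{ij}\bar{\phi}_i\phi_j$ forces $\partial\phi_k/\partial\bar{z}$ to have no imaginary (resp.\ $\tau$-) component, so that $\Re(\phi_k\,dz)$ is closed and the integral is path-independent on a simply connected domain, is precisely the mechanism that makes the representation formula work in both the complex and paracomplex cases.

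One caveat: your closing claim that this conjugation-invariance ``breaks the circularity'' overstates what it accomplishes. The invariance yields closedness \emph{once (iii) is granted}; but (iii) is itself a condition coupling $\phi$ to the very map $f$ being constructed, since the Christoffel symbols $\Gamma^k_{ij}$ must be evaluated along $f$. That coupling does not go away: it is exactly what the paper flags in Remark~\ref{conservation} (the representation is really an integral equation), and it is handled either by reading the hypothesis of the converse as a hypothesis on the pair $(\phi,f)$ jointly, or --- as in the Lie group setting of Theorem~\ref{teo3.6}, which is what the rest of the paper actually uses --- by passing to a left-invariant frame, where the coefficients $L^c_{ab}$ are constants and the circularity disappears. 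Apart from this point of interpretation, and the routine verifications you compress (e.g.\ that in the spacelike case the common value $(f_u,f_u)=(f_v,f_v)$ is automatically positive because a Lorentzian $3$-manifold admits no negative-definite $2$-planes), your outline is correct.
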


The equation $iii)$ in the Theorem~\ref{teo1} is a system of partial differential equations and, in general, it is quite hard to find explicit solutions. However, in certain spaces, like the Lie groups, these equations become a system of partial differential equations with constant coefficients.


\subsection{The case of Lorentzian Lie Groups}\label{lie}

Let $M$ be a 3-dimensional Lie group endowed with a left-invariant Lorentzian metric $g$
and $\{E_1, E_2, E_3\}$ be left-invariant orthonormal frame field, with $E_1, E_2$ spacelike and $E_3$ timelike. For tangent vectors $W=\sum_{i=1}^3 w_i\,E_i$ e $Y=\sum_{i=1}^3 y_i\,E_i$,
 the Lorentzian cross product  $Y \times W$ is given by:
$$
 Y \times W = (y_2\,w_3-w_2\,y_3)\,E_1+(y_3\,w_1-w_3\,y_1)\,E_2+(y_2\,w_1-w_2\,y_1)\,E_3.
$$
 It is easy to check that $Y \times W = - W \times Y$ and, also,
 \begin{equation}\label{eq:5.1}
 \begin{aligned}
 &g(U \times Y, W \times V) = g(U,V)\, g(Y, W) -
 g(U, W)\,g(Y,V),\\
&(U \times Y) \times W = g(Y,W)\, U - g(U, W)\,Y.
 \end{aligned}
 \end{equation}

Let $f:\Omega\subset \K \rightarrow M$ be a conformal minimal spacelike  (or timelike) immersion, where
$\Omega \subset \K$ is an open set.
Fixed a isothermal parameter $z\in\Omega$, we can write the (para)complex
tangent field $\phi = \displaystyle\frac{\partial f}{\partial z}$ along $f$ both in terms of local coordinates $x_1,x_2,x_3$  in $M$ and also using the left-invariant frame field. Hence, one has
\begin{equation}
\phi = \sum_{a=1}^{3} \phi_a\frac{\partial}{\partial x_a} = \sum_{a=1}^{3} \psi_a E_a,\nonumber
\end{equation}
where the functions $\phi_a$ and $\psi_b$, with $a,b=1,2,3$, are related by
\begin{equation}\label{eq:3.1}
\phi_a = \sum_{b=1}^{3} A_{ab}\psi_b,\quad a = 1,2,3,
\end{equation}
where $A: \Omega \rightarrow GL(3,\r)$. In terms of the components $\psi_a$, $a=1,2,3$, the equation $iii)$ in Theorem~\ref{teo1}
may be written as
$$
\frac{\partial\psi_c}{\partial\bar{z}} + \frac{1}{2}\sum_{a,b=1}^{3} L_{ab}^c \bar{\psi_a}\psi_b = 0,
\quad c = 1,2,3,
$$
where the symbols $L_{ab}^c$ are defined by
$$
\nabla_{E_a} E_b = \sum_{c=1}^{3} \frac{L_{ab}^c}{2} E_c \quad a,b =1,2,3.
$$
Let $C_{ab}^c$ be the structure constants of the Lie algebra of $M$, i.e., $[E_a,E_b] = \sum_{c=1}^{3} C_{ab}^c E_c$.
Therefore, by the Levi-Civita Theorem, we have
\begin{equation}\label{Labc}
L_{ab}^c = (C_{ab}^c - C_{bc}^a\,\varepsilon_a\,\varepsilon_c - C_{ac}^b\,\varepsilon_b\,\varepsilon_c),
\end{equation}
where $\varepsilon_a = \langle E_a, E_a\rangle$, with $a = 1,2,3$.

Consequently, the Theorem~\ref{teo1} may be written in the case of 3-dimensional Lie groups as follows.
\begin{thm}[\cite{Liramm}]\label{teo3.6}
Let $M$ be 3-dimensional Lie group endowed with a left-invariant Lorentzian metric and $\{E_1,E_2,E_3\}$ a left-invariant orthonormal frame field. Let $f: \Omega \rightarrow M$ be a conformal minimal immersion, where $\Omega \subset \K$ is an open set. We denote by $\phi \in \Gamma(f^\ast TM\otimes \K)$ the (para)complex tangent vector
$$
\phi(z) = \frac{\partial f}{\partial z}.
$$
Then, the components $\psi_a$, $a= 1, 2, 3$, of $\phi$ defined by
\begin{equation}
\phi(z) = \sum_{a=1}^{3} \psi_a E_a|_{f(z)},\nonumber
\end{equation}
satisfy the followings conditions:
\begin{enumerate}\label{eq:3.2}
  \item [i)] $|\psi_1|^2 + |\psi_2|^2 - |\psi_3|^2 \neq 0$,
  \item [ii)]${\psi_1}^2 + {\psi_2}^2 - {\psi_3}^2 = 0$,
  \item [iii)]$\displaystyle\frac{\partial\psi_c}{\partial\bar{z}} +
  \sum_{a,b=1}^{3} \frac{L_{ab}^c}{2} \bar{\psi_a}\psi_b = 0$.
\end{enumerate}
Conversely, if $\Omega\subset\K$ is a simply connected domain and $\psi_a:\Omega\to\K$, $a = 1,2,3$, are (para)complex functions satisfying ~the conditions above, then the map $f:\Omega\to M$ which coordinate components are given by:
\begin{equation}\label{minimalfa}
f_a = 2\,\Re \int  \sum_{b=1}^{3} A_{ab}\,\psi_b\, dz, \qquad a = 1,2,3,
\end{equation}
is a well-defined conformal minimal immersion.
\end{thm}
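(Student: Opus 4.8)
The plan is to obtain Theorem~\ref{teo3.6} as a change of frame in Theorem~\ref{teo1}, passing from the coordinate basis $\{\partial/\partial x_a\}$ to the left-invariant orthonormal frame $\{E_a\}$ by means of the relation~\eqref{eq:3.1}, $\phi_a=\sum_b A_{ab}\psi_b$, where $A:\Omega\to GL(3,\r)$ is the (invertible) matrix expressing the $E_b$ in terms of the $\partial/\partial x_a$ along $f$. Since $A$ is pointwise invertible, each of the three conditions of Theorem~\ref{teo1} is equivalent to its counterpart in Theorem~\ref{teo3.6} once rewritten in the components $\psi_a$, and the reconstruction formula will match.

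I would first dispatch conditions i) and ii), which are purely algebraic. Because $\{E_1,E_2,E_3\}$ is orthonormal with $\varepsilon_1=\varepsilon_2=1$ and $\varepsilon_3=-1$, the bilinear and (para)Hermitian extensions of $g$ act diagonally on the frame, so that $(\phi,\phi)=\sum_a\varepsilon_a\psi_a^2=\psi_1^2+\psi_2^2-\psi_3^2$ and $\langle\langle\phi,\phi\rangle\rangle=(\phi,\bar\phi)=\sum_a\varepsilon_a\psi_a\bar\psi_a=|\psi_1|^2+|\psi_2|^2-|\psi_3|^2$. Hence ii) of Theorem~\ref{teo1} becomes ii) here, and i) becomes i) here, immediately.

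The substantive step is condition iii), and I would argue that both forms are expressions of the single, frame-independent identity $\tfrac{D}{\partial\bar z}\phi=0$, where $\tfrac{D}{\partial\bar z}$ is the pullback connection along $f$ and $\phi=\partial f/\partial z$. For the coordinate frame this is transparent: since $f$ is real-valued one has $\partial f_i/\partial\bar z=\bar\phi_i$, so that $\tfrac{D}{\partial\bar z}\phi=\sum_k\big(\partial\phi_k/\partial\bar z+\sum_{i,j}\Gamma_{ij}^k\bar\phi_i\phi_j\big)\,\partial/\partial x_k$ vanishes precisely when iii) of Theorem~\ref{teo1} holds. Expanding the same covariant derivative in the left-invariant frame, using $\partial f/\partial\bar z=\bar\phi=\sum_a\bar\psi_a E_a$ together with $\nabla_{E_a}E_b=\tfrac12\sum_c L_{ab}^c E_c$, gives
\begin{equation}
\frac{D}{\partial\bar z}\phi=\sum_c\Big(\frac{\partial\psi_c}{\partial\bar z}+\sum_{a,b}\frac{L_{ab}^c}{2}\,\bar\psi_a\psi_b\Big)E_c,\nonumber
\end{equation}
whose vanishing is exactly iii) of Theorem~\ref{teo3.6}; the two conditions are therefore equivalent. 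The identity~\eqref{Labc} for $L_{ab}^c$ I would regard as already supplied, it being the Koszul formula for left-invariant fields (the metric coefficients being constant) evaluated on the structure constants $C_{ab}^c$.

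For the converse, given $\psi_a$ satisfying i)--iii), I would set $\phi_a=\sum_b A_{ab}\psi_b$ and run the above equivalences backwards, so that $\phi=(\phi_1,\phi_2,\phi_3)$ meets conditions i)--iii) of Theorem~\ref{teo1}; the formula~\eqref{minimalfa} is then simply $f_a=2\,\Re\int\phi_a\,dz$, and Theorem~\ref{teo1} yields a well-defined conformal minimal immersion on the simply connected $\Omega$. The main obstacle I anticipate lies in this direction, since $A$ is a priori defined only along an immersion $f$; I would address it by noting that the frame system iii) has \emph{constant} coefficients $L_{ab}^c$, hence is self-contained, and that~\eqref{eq:3.1} recovers $A$ pointwise from the frame and coordinate data, so its substitution into~\eqref{minimalfa} is consistent. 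A secondary point needing care is the uniform treatment of the conjugation and zero-divisor conventions for $\K=\C$ and $\K=\L$, so that the reality relation $\partial f/\partial\bar z=\bar\phi$ and the diagonalization of $g$ hold verbatim in both cases.
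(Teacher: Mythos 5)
Your proposal is correct and takes essentially the same approach as the paper: Theorem~\ref{teo3.6} is obtained from Theorem~\ref{teo1} by the frame change $\phi_a=\sum_b A_{ab}\psi_b$ of \eqref{eq:3.1}, with conditions i) and ii) diagonalizing because the frame is orthonormal with $\varepsilon_1=\varepsilon_2=1$, $\varepsilon_3=-1$, and condition iii) passing to the constant-coefficient system via $\nabla_{E_a}E_b=\frac{1}{2}\sum_c L_{ab}^c E_c$ and \eqref{Labc}. Your closing caveat about $A$ being defined only along $f$ is exactly the point the paper records in Remark~\ref{conservation}, which acknowledges that \eqref{minimalfa} is an integral equation rather than an explicit quadrature.
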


\begin{rem}\label{conservation} We observe explicitly that the equations above does not give the coordinates components of the immersion $f$ just by a direct integration since the functions $A_{ab}$ must be computed along the solutions $f_a$. The formula \eqref{minimalfa} is in fact an integral equation. However, as we will see, for special ambient manifolds this problem can be avoided by {\it ad hoc} arguments.
\end{rem}

\section{The Bj\"{o}rling problem for three-dimensional Lie groups}\label{bjorling}

We denote by $M$ a 3-dimensional Lie group endowed with a left-invariant Lorentzian metric $g$.
Let $\beta: I \rightarrow M$ be a analytic curve in $M$ and $V : I \rightarrow TM$ a unitary  analytic spacelike  (respectively, timelike) vector field along $\beta$, such that $g(\dot{\beta}, V)\equiv 0$. The Bj\"{o}rling problem can be formulate as follows:
\bigskip{}

\noindent {\it  Determine a timelike (respectively, spacelike) minimal
surface $$f: I \times (-\epsilon,\epsilon) = \Omega\subseteq\K \rightarrow M$$ such that
\begin{enumerate}
\item[i.] $f(u,0) = \beta(u)$,
\item[ii] $N(u,0) = V(u)$,
\end{enumerate}
for all $u\in I$, where $N :\Omega \rightarrow TM$ is the Gauss map of the surface.}
\bigskip{}

Before showing that the above problem have a unique solution, we prove the following:
\begin{lem}\label{lem5.2}
Let $\psi_i:\Omega \subseteq \K \rightarrow \K$, $i=1,2$, be two differentiable functions and
$\psi_3^2 = \psi_1^2 + \psi_2^2$. We suppose that $\psi_i$, $i=1,2$, satisfy the two first equations of the third item in Theorem~\ref{teo3.6}. Then $\psi_3$ satisfy the third equation.
\end{lem}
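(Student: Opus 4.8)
The plan is to contract the three equations of item~iii) against the weight vector $(\varepsilon_1\psi_1,\varepsilon_2\psi_2,\varepsilon_3\psi_3)$, where $\varepsilon_1=\varepsilon_2=1$ and $\varepsilon_3=-1$, so as to isolate the $c=3$ equation from the $c=1,2$ ones. For $c=1,2,3$ I set
$$F_c:=\frac{\partial\psi_c}{\partial\bar z}+\sum_{a,b=1}^3\frac{L_{ab}^c}{2}\,\bar\psi_a\psi_b,$$
so that the hypothesis reads $F_1=F_2=0$ and the goal is $F_3=0$. I would then examine $\sum_{c=1}^3\varepsilon_c\,\psi_c\,F_c$ and show it vanishes identically.

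First I would dispose of the derivative terms. The relation $\psi_3^2=\psi_1^2+\psi_2^2$ can be written as $\sum_c\varepsilon_c\psi_c^2\equiv 0$; differentiating it with respect to $\bar z$ gives $\sum_c\varepsilon_c\psi_c\,\partial_{\bar z}\psi_c=0$, so in $\sum_c\varepsilon_c\psi_cF_c$ exactly the derivative contributions cancel. What is left is the cubic term $S:=\sum_{a,b,c}\varepsilon_c\,L_{ab}^c\,\bar\psi_a\psi_b\psi_c$ (up to the factor $1/2$).

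The algebraic heart of the argument is the identity $\varepsilon_cL_{ab}^c+\varepsilon_bL_{ac}^b=0$, which is just the statement that the Levi-Civita connection is compatible with $g$; alternatively it follows by substituting the explicit expression~\eqref{Labc} for $L_{ab}^c$ and using the antisymmetry $C_{bc}^a=-C_{cb}^a$ of the structure constants. Relabelling $b\leftrightarrow c$ in $S$ and averaging, the symmetric factor $\psi_b\psi_c$ meets the antisymmetrized coefficient $\varepsilon_cL_{ab}^c+\varepsilon_bL_{ac}^b$, which vanishes; hence $S=0$ and therefore $\sum_c\varepsilon_c\psi_cF_c=0$. Since $F_1=F_2=0$ this collapses to $\varepsilon_3\psi_3F_3=-\psi_3F_3=0$, that is, $\psi_3F_3\equiv 0$ on $\Omega$.

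The remaining, and most delicate, step is the passage from $\psi_3F_3\equiv 0$ to $F_3\equiv 0$. On the open set where $\psi_3\neq 0$ one divides and obtains $F_3=0$ directly; I would then observe that the complement, the zero locus of $\psi_3$, cannot contain an open set without forcing $\psi_1^2+\psi_2^2\equiv 0$ there, and conclude by continuity of $F_3$ that the vanishing extends to all of $\Omega$. This control of the zero set of $\psi_3$ (and the tacit choice of a differentiable branch of the square root defining $\psi_3$) is where the real care is needed; the cancellations producing $\psi_3F_3=0$ are routine once the metric-compatibility identity is recorded.
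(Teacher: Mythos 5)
Your derivation of $\psi_3F_3\equiv 0$ is correct, and it is in substance the paper's own computation: the weighted sum $\sum_c\varepsilon_c\psi_cF_c$ you form is, after the derivative terms cancel, exactly the quantity $\sum_{j,k}\bigl[L^1_{jk}\psi_1+L^2_{jk}\psi_2-L^3_{jk}\psi_3\bigr]\bar\psi_j\psi_k$ whose vanishing the paper establishes. The only difference is bookkeeping: the paper expands this sum and checks the coefficient identities $L^k_{ik}=0$, $L^2_{i1}+L^1_{i2}=0$, $L^1_{i3}-L^3_{i1}=0$, $L^2_{i3}-L^3_{i2}=0$ term by term from \eqref{Labc}, whereas you obtain all of them at once from the skew-symmetry $\varepsilon_cL^c_{ab}+\varepsilon_bL^b_{ac}=0$ (metric compatibility) together with symmetrization in $(b,c)$. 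That packaging is cleaner, but it is the same argument.

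Where your proposal diverges from the paper is the final step, and there it has a genuine gap. You are right to flag the passage from $\psi_3F_3\equiv 0$ to $F_3\equiv 0$ as the delicate point --- the paper simply divides by $\psi_3$ without comment --- but the fix you sketch fails over $\C$. The relation $\psi_1^2+\psi_2^2\equiv 0$ on an open set is not absurd in the complex case: it holds for any isotropic pair $\psi_2=\pm i\,\psi_1$, and on such a set the conclusion of the lemma can genuinely fail, so no continuity argument can rescue it. Concretely, in the de Sitter model of Section~\ref{sitter} the nonzero symbols are $L^1_{13}=L^2_{23}=L^3_{11}=L^3_{22}=-1$; take $\psi_1\equiv 1$, $\psi_2\equiv i$, $\psi_3\equiv 0$ on $\Omega\subset\C$. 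Then $\psi_3^2=\psi_1^2+\psi_2^2$, and the first two equations of item iii) of Theorem~\ref{teo3.6} hold because every $L^1$- and $L^2$-term contains the factor $\psi_3$; yet the third equation reads
\begin{equation}
\frac{\partial\psi_3}{\partial\bar z}+\tfrac12\bigl(L^3_{11}\,\bar\psi_1\psi_1+L^3_{22}\,\bar\psi_2\psi_2\bigr)=-1\neq 0.\nonumber
\end{equation}
So in the complex case the statement needs an extra hypothesis (for instance that $\psi_3$ vanishes on no open subset of $\Omega$); this is harmless in the paper's applications, where condition i) of Theorem~\ref{teo3.6} is imposed and, in the paracomplex timelike case, together with ii) even forces $\psi_3$ to be nowhere zero. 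Over $\L$ your idea does close the argument, but you must finish it: a sum of two squares in $\L$ vanishes only if both summands vanish, so on the interior of $\{\psi_3=0\}$ one gets $\psi_1=\psi_2=\psi_3=0$ and hence $F_3=0$ trivially there, while on the complement of that interior, which is the closure of $\{\psi_3\neq 0\}$, $F_3=0$ follows by division and continuity.
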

\begin{proof}
Differentiating $\psi_3^2 = \psi_1^2 + \psi_2^2$ with respect to $\bar{z}$ and using that $\psi_i$, $i=1,2$, satisfy
the two first equations of item $iii)$ in Theorem~\ref{teo3.6}, we have that

\begin{eqnarray}
\psi_3\frac{\partial \psi_3}{\partial\bar{z}} &=& \psi_1\frac{\partial \psi_1}{\partial\bar{z}} +
\psi_2\frac{\partial \psi_2}{\partial\bar{z}}
= - \frac{1}{2}\sum_{j,k=1}^{3}[L_{jk}^1\psi_1 + L_{jk}^2\psi_2] \bar{\psi_j}\psi_k.\nonumber
\end{eqnarray}
Therefore, to prove the lemma, it suffices to show that
$$
\sum_{j,k=1}^{3}[L_{jk}^1\psi_1 + L_{jk}^2\psi_2 - L_{jk}^3\psi_3] \bar{\psi_j}\psi_k = 0.
$$
We may write the above sum as follows:
$$
\sum_{i=1}^{3}\{L_{i1}^1{\psi_1}^2 + L_{i2}^2{\psi_2}^2 - L_{i3}^3{\psi_3}^2  +
(L_{i1}^2 + L_{i2}^1)\psi_1\psi_2 + (L_{i3}^1 - L_{i1}^3)\psi_1\psi_3 +
(L_{i3}^2 - L_{i2}^3)\psi_3\psi_2\}\bar{\psi_i}.
$$
Now, using \eqref{Labc}, we have that
$$L_{ik}^k = L_{i1}^2 + L_{i2}^1 = L_{i3}^1 - L_{i1}^3 = L_{i3}^2 - L_{i2}^3 = 0,\qquad
i = 1,2,3.$$
Then
$$
\frac{\partial \psi_3}{\partial\bar{z}} + \frac{1}{2}\sum_{j,k=1}^{3}L_{jk}^3\bar{\psi_j}\psi_k = 0.
$$

\end{proof}

\subsection{The Bj\"{o}rling Problem for timelike surfaces}

We observe that if $f$ is a timelike conformal minimal immersion we have that $\psi_i = \dfrac{\partial f_i}{\partial z}$, $i=1,2,3$, satisfy the condition $iii)$ in Theorem~\ref{teo3.6}, which is equivalent to the hyperbolic system of partial differential equations (see \cite{ev}):

\begin{equation}\label{eqquaselinaer}
\frac{\partial^2 f_i}{\partial u ^2} - \frac{\partial^2 f_i}{\partial v ^2} + B_i\Big(\frac{\partial f_i}{\partial u} ,
\frac{\partial f_i}{\partial v}\Big) = 0,
\end{equation}
where $B_i$, $i=1,2,3$, contain at most first-order derivatives of the functions $f_i$.

So the Bj\"{o}rling problem may be interpreted as a Cauchy problem
involving quasilinear partial differentiable equations~\eqref{eqquaselinaer} with initial data:
$$
f_i(u,0) = \beta_i(u), \quad \quad (V(u) \times \dot{\beta}(u))_i =  \Big(\frac{\partial f}{\partial v}(u,0)\Big)_i, \quad i=1,2,3.
$$

\begin{rem}
Let $\gamma(s) = (u(s),v(s))$ be a characteristic curve in $\Omega$ (see \cite{ev}), then
$$
u'(s)^2 - v'(s)^2 = 0
$$
that is, $\gamma$ ia a straight line $u = \pm v$ in $\Omega$.

It is known that the Cauchy problem may not have a unique solution or it does not have solutions at all if the initial data is along characteristic curves. Moreover, this lines correspond to the lightlike curves of the Bj\"{o}rling problem. Consequently, we consider two cases: when $\beta$ is a timelike curve we will call the corresponding problem the
{\it timelike Bj\"{o}rling problem}, and when $\beta$ is a spacelike curve we will have the {\it spacelike Bj\"{o}rling problem}.
\end{rem}

\begin{thm}[\textbf{Timelike Bj\"{o}rling Problem}]\label{PdeBdot-t}
Let $\beta: I \rightarrow M$ be a analytic timelike curve in $M$ and $V : I \rightarrow TM$ a unitary
analytic spacelike vector field along $\beta$, such that $g(\dot{\beta}, V)\equiv 0$. Then, there exists a unique conformal timelike minimal
surface $$f: I \times (-\epsilon,\epsilon) = \Omega\subseteq\L \rightarrow M$$ such that
\begin{enumerate}
\item[i.] $f(u,0) = \beta(u)$,
\item[ii] $N(u,0) = V(u)$,
\end{enumerate}
for all $u\in I$, where $N :\Omega\rightarrow TM$ is the Gauss map of the surface.
\end{thm}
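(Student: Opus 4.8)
The plan is to translate the Björling problem into an analytic Cauchy problem for the system~\eqref{eqquaselinaer}, to solve it by the Cauchy--Kovalevskaya theorem, and then to verify a posteriori that the solution is conformal and nondegenerate, so that Theorem~\ref{teo3.6} promotes it to a minimal immersion. The first step is to pin down the Cauchy data along $v=0$. Condition i.\ forces $\partial_u f(u,0)=\dot\beta(u)$, and I claim the companion datum must be $\partial_v f(u,0)=V(u)\times\dot\beta(u)$: for any conformal timelike immersion the frame $\{\partial_u f,\partial_v f,N\}$ is orthogonal, so $\partial_v f$ lies in the line spanned by $N\times\partial_u f$, and conformality together with~\eqref{eq:5.1} forces $\partial_v f=\pm\,N\times\partial_u f$, the sign being fixed by the orientation that makes $z=u+\tau v$ the isothermal coordinate with Gauss map $N$.

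Writing $\phi=\partial f/\partial z=\tfrac12(\partial_u f+\tau\,\partial_v f)$, I would next evaluate the two extensions of $g$ on $\phi$ along $v=0$. Since $g(\dot\beta,V\times\dot\beta)=0$ the mixed terms drop out, and~\eqref{eq:5.1} with $g(V,V)=1$ and $g(\dot\beta,V)=0$ yields $g(V\times\dot\beta,V\times\dot\beta)=-g(\dot\beta,\dot\beta)$. Hence $(\phi,\phi)(u,0)=\tfrac14[g(\dot\beta,\dot\beta)+g(V\times\dot\beta,V\times\dot\beta)]=0$ and $\langle\langle\phi,\phi\rangle\rangle(u,0)=\tfrac14[g(\dot\beta,\dot\beta)-g(V\times\dot\beta,V\times\dot\beta)]=\tfrac12\,g(\dot\beta,\dot\beta)\neq0$, so conditions ii) and i) of Theorem~\ref{teo3.6} already hold on the initial line, the nonvanishing (and its sign) reflecting precisely that $\beta$ is timelike.

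All the data are real-analytic ($g$ is left-invariant, and $\beta$, $V$ are analytic), and~\eqref{eqquaselinaer} has the normal form $\partial_{vv}f_i=\partial_{uu}f_i+B_i$, for which the line $v=0$ is noncharacteristic. The Cauchy--Kovalevskaya theorem then produces a unique real-analytic solution $f$ on a strip $\Omega=I\times(-\epsilon,\epsilon)$ realizing the Cauchy data above. Conversely, every solution of the Björling problem carries exactly this Cauchy data by the orthogonal-frame argument of the first paragraph, so uniqueness of $f$ as a Björling solution is inherited from the uniqueness in Cauchy--Kovalevskaya (and, in the smooth class, from Holmgren-type uniqueness across the noncharacteristic initial curve).

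The remaining and most delicate point, which I expect to be the main obstacle, is that the solution, which by construction encodes item iii) of Theorem~\ref{teo3.6}, is \emph{globally} conformal. I would prove that the paracomplex Hopf differential $(\phi,\phi)=\psi_1^2+\psi_2^2-\psi_3^2$ is paraholomorphic: differentiating in $\bar z$ and substituting iii) gives a cubic expression in which the coefficient $\varepsilon_c L_{ab}^c$ is antisymmetric in $(b,c)$ by metric compatibility, while $\psi_b\psi_c$ is symmetric, so $\partial(\phi,\phi)/\partial\bar z=0$; this is the same algebraic cancellation that underlies~\eqref{Labc} and Lemma~\ref{lem5.2}. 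A paraholomorphic function splits as $\Phi^{+}(u+v)+\tau\,\Phi^{-}(u-v)$, and since $(\phi,\phi)$ vanishes on $v=0$, which meets every null line $u\pm v=\mathrm{const}$, both factors vanish and $(\phi,\phi)\equiv0$ on $\Omega$ (after shrinking $\epsilon$ so that $u\pm v$ stay in $I$). Thus ii) holds throughout, i) persists by continuity, and Theorem~\ref{teo3.6} identifies $f$ as the desired conformal timelike minimal immersion. This propagation step is exactly where the timelike (rather than lightlike) character of $\beta$ is indispensable, since only a noncharacteristic initial curve pins down both null factors of the Hopf differential.
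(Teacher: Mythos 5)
Your proposal is correct, but it follows a genuinely different route from the paper's. The paper never solves the second-order system \eqref{eqquaselinaer} directly: it applies Cauchy--Kovalevskaya to the \emph{first-order} system \eqref{eq:5.4} for the frame components $\psi_1,\psi_2$ alone, \emph{imposes} conformality from the start by defining $\psi_3$ through the algebraic relation $\psi_3^2=\psi_1^2+\psi_2^2$ (the initial datum $\psi(u,0)=A^{-1}(\beta(u))\phi(u,0)$ fixing the determination of the square root), invokes Lemma~\ref{lem5.2} to conclude that $\psi_3$ then satisfies the third equation of item iii), and finally reconstructs $f$ via the converse direction of Theorem~\ref{teo3.6}. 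You instead solve the harmonic-map Cauchy problem for $f$ itself and recover conformality \emph{a posteriori}: the Hopf quantity $(\phi,\phi)=\psi_1^2+\psi_2^2-\psi_3^2$ is paraholomorphic (your antisymmetry $\varepsilon_c L^c_{ab}=-\varepsilon_b L^b_{ac}$ from metric compatibility is exactly the cancellation hidden in \eqref{Labc} and in the proof of Lemma~\ref{lem5.2}), it vanishes on the noncharacteristic line $v=0$, and a paraholomorphic function splitting as $\Phi^+(u+v)+\tau\,\Phi^-(u-v)$ that vanishes on a curve meeting every null line vanishes identically. Each route has its advantages. The paper's version produces the Weierstrass data $\psi$ directly -- which is what all the examples in Sections~\ref{lorentzianH}--\ref{H2xR} actually use, since in a Lie group \eqref{eq:5.4} has constant coefficients -- but it glosses over the square-root determination and leans on the converse of Theorem~\ref{teo3.6}, whose formula \eqref{minimalfa} is really an integral equation (Remark~\ref{conservation}). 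Your version bypasses both of those subtleties, since $f$ comes straight out of Cauchy--Kovalevskaya, and it makes conceptually transparent why the timelike (noncharacteristic) hypothesis on $\beta$ is indispensable: it is what lets the initial line kill both null factors of the Hopf differential. Two minor glosses, both at the same level of informality as the paper's own proof: the vanishing of $(\phi,\phi)$ is first obtained only on the diamond $\{u\pm v\in I\}$ rather than the full strip (real-analyticity of $(\phi,\phi)$, or restricting to compact subintervals of $I$, finishes the job), and your uniqueness claim still needs the paper's local-to-global patching over a cover of the compact interval $I$, since Cauchy--Kovalevskaya uniqueness is local.
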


\proof
Consider the system
\begin{equation}\label{eq:5.4}\left\{\begin{aligned}
\displaystyle\frac{\partial \psi_1}{\partial\bar{z}} + \sum_{j,k=1}^{3}L_{jk}^1\bar{\psi_j}\psi_k &= 0,\\
\displaystyle\frac{\partial \psi_2}{\partial\bar{z}} + \sum_{j,k=1}^{3}L_{jk}^2\bar{\psi_j}\psi_k &= 0,
\end{aligned}\right.
\end{equation}
where $\psi_i : \Omega \rightarrow \L$ and $\psi_3^2 = \psi_1^2 + \psi_2^2$.

As  $\beta$ is a timelike curve, then it's not a characteristic curve and, so, this system is of Cauchy-Kovalevskaya
type (\cite{P}).  Therefore, fixing the initial data, it admits a unique solution (locally).
Hence, we must find the initial data so that the minimal surface has the required properties. We observe that, if $f$  is a solution of the Bj\"{o}rling problem, we have that
\begin{equation}
\frac{\partial f}{\partial u}(u,0) = \dot{\beta}(u)\quad\mbox{e}\quad\frac{\partial f}{\partial v}(u,0) = V(u) \times \dot{\beta}(u).
\end{equation}
Then
\begin{equation}\label{phiu0}
\phi(u,0) = \frac{1}{2}\big{(}\frac{\partial f}{\partial u} + \tau\frac{\partial f}{\partial v}\big{)}(u,0) =
\frac{1}{2}\big{(} \dot{\beta}(u) + \tau\, V(u)\times\dot{\beta}(u)\big{)}.
\end{equation}
So the initial condition for the system is given by
\begin{equation}
\psi(u,0) = A^{-1}(\beta(u))\,\phi(u,0),\nonumber
\end{equation}
where $A$ is given by \eqref{eq:3.1}.

We note that Lemma~\ref{lem5.2} implies that the functions $\psi_i$ satisfy the equations ii) and iii) in
the Weierstrass representation Theorem~\ref{teo3.6}.
Furthermore, from \eqref{phiu0}, it follows that
$$\langle\langle \phi(u,0),\phi(u,0)\rangle\rangle  = \frac{1}{4}[g(\dot{\beta},\dot{\beta}) -
g(V\times\dot{\beta},V\times\dot{\beta})] < 0,$$
because, from \eqref{eq:5.1}, it results that $g(V\times\dot{\beta},V\times\dot{\beta}) = -g(\dot{\beta},\dot{\beta})$ and
$\beta$ is a timelike curve.
Shrinking $\Omega$ if necessary, we can assume that $$\langle\langle \phi(u,v),\phi(u,v)\rangle\rangle < 0, \quad (u,v)\in\Omega.$$
Since $A$ is the Jacobian matrix of a left-invariant translation in $M$, it results that
$$|\psi_1(u,v)|^2 + |\psi_2(u,v)|^2 - |\psi_3(u,v)|^2= \langle\langle\phi(u,v),\phi(u,v)\rangle\rangle < 0,\quad (u,v)\in\Omega.$$

Therefore the functions $\psi_i$, $i=1,2,3$, satisfy the conditions of Theorem~\ref{teo3.6}. So, from the Cauchy-Kovalevskaya Theorem, there exists a unique conformal timelike minimal
immersion, which is a local solution of the timelike Bj\"{o}rling problem. Observe that the initial condition forces the choice of one of the determinations of $\psi_3^2 = \psi_1^2 + \psi_2^2$.

So we have the local solution. We may be consider that $I$ is compact. Since the solution is locally unique
if $\beta(I)$ is contained in a coordinate neighborhood for $\epsilon>0$ sufficiently small, we have that solution is unique
because $I$ is compact. If it is not, as $I$ is compact, we can cover it with a finite number of inverse images
of neighborhoods coordinates through $\beta$ and using again the local uniqueness of the problem
we obtain the global solution.
\endproof

We can prove that the spacelike Bj\"{o}rling problem has a unique solution analogously to the timelike case. In this case the initial data is
$$\psi(u,0) = A^{-1}(\beta(u))\,\phi(u,0),$$
where
$$
\phi(u,0) = \frac{1}{2}(\dot{\beta}(u) - \tau\, V(u)\times \dot{\beta}(u)).
$$

\begin{thm}[\textbf{Spacelike Bj\"{o}rling problem}]\label{PdeBdot-e}
Let $\beta: I \rightarrow M$ be a analytic spacelike curve in $M$ and $V : I \rightarrow TM$ a unitary
analytic spacelike vector field along $\beta$, such that $g(\dot{\beta}, V)\equiv 0$. Then there exists a unique conformal timelike minimal
surface $$f: I \times (-\epsilon,\epsilon) = \Omega\subseteq\L \rightarrow M$$ such that:
\begin{enumerate}
\item[i.] $f(u,0) = \beta(u)$,
\item[ii] $N(u,0) = V(u)$,
\end{enumerate}
for all $u\in I$, where $N :\Omega\rightarrow TM$ is the Gauss map of the surface.
\end{thm}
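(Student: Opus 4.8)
The plan is to follow the proof of Theorem~\ref{PdeBdot-t} line by line, the sole substantive change being the causal character of $\dot\beta$ and the sign it forces in the initial data. I would again set up the system~\eqref{eq:5.4} for $\psi_1,\psi_2:\Omega\to\L$ subject to $\psi_3^2=\psi_1^2+\psi_2^2$, and prescribe $\psi(u,0)=A^{-1}(\beta(u))\,\phi(u,0)$ with
$$
\phi(u,0)=\frac{1}{2}\bigl(\dot\beta(u)-\tau\,V(u)\times\dot\beta(u)\bigr).
$$
Writing $\phi=\frac{1}{2}(f_u+\tau f_v)$, the condition $f_u(u,0)=\dot\beta(u)$ together with conformality leaves only the sign of $f_v(u,0)=\pm V\times\dot\beta$ to be determined, and I would fix it at the outset by the orientation requirement $N(u,0)=V(u)$.

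The computation that pins down this sign is the one step I expect to require real care, since it is exactly where the spacelike case departs from the timelike one. Using that $V$ is a spacelike unit field with $g(V,\dot\beta)\equiv0$, the second identity in~\eqref{eq:5.1} gives $\dot\beta\times(V\times\dot\beta)=-g(\dot\beta,\dot\beta)\,V$. When $\beta$ is spacelike, $g(\dot\beta,\dot\beta)>0$, so this is a \emph{negative} multiple of $V$; consequently $f_u\times f_v$ is a positive multiple of $V$ precisely when $f_v=-V\times\dot\beta$, which is the sign appearing above. (In the timelike case $g(\dot\beta,\dot\beta)<0$ reverses this, explaining the $+\tau$ of Theorem~\ref{PdeBdot-t}.) A wrong choice here would silently produce the surface with normal $-V$.

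With the initial data in hand, the rest runs parallel to the timelike proof. Since $\beta$ is spacelike it is not lightlike, so the initial line $v=0$ is never one of the characteristics $u=\pm v$; hence~\eqref{eq:5.4} is of Cauchy-Kovalevskaya type and admits a unique local analytic solution. Lemma~\ref{lem5.2} then forces $\psi_3$ to satisfy the third equation of item iii) in Theorem~\ref{teo3.6}, so conditions ii) and iii) hold. For condition i) I would compute, using the first identity in~\eqref{eq:5.1} (which yields $g(V\times\dot\beta,V\times\dot\beta)=-g(\dot\beta,\dot\beta)$),
$$
\langle\langle\phi(u,0),\phi(u,0)\rangle\rangle=\frac{1}{4}\bigl[g(\dot\beta,\dot\beta)-g(V\times\dot\beta,V\times\dot\beta)\bigr]=\frac{1}{2}\,g(\dot\beta,\dot\beta)>0,
$$
so that $\langle\langle\phi,\phi\rangle\rangle\neq0$; shrinking $\Omega$ and using that $A$ is the Jacobian of a left translation, I obtain $|\psi_1|^2+|\psi_2|^2-|\psi_3|^2\neq0$ on $\Omega$. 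Theorem~\ref{teo3.6} then certifies that the map~\eqref{minimalfa} built from $\psi$ is a conformal timelike minimal immersion, and by construction $f(u,0)=\beta(u)$ and $N(u,0)=V(u)$.

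Finally, I would pass from local to global exactly as in Theorem~\ref{PdeBdot-t}: local uniqueness from Cauchy-Kovalevskaya, combined with a finite cover of the compact interval $I$ by preimages under $\beta$ of coordinate neighbourhoods, patches the local solutions into a unique global one.
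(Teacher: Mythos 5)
Your proposal is correct and follows essentially the same route as the paper, which treats this theorem as the timelike proof repeated verbatim with the initial datum $\phi(u,0)=\frac{1}{2}\bigl(\dot\beta(u)-\tau\,V(u)\times\dot\beta(u)\bigr)$ -- exactly the sign you derive. Your explicit justification of that sign via $\dot\beta\times(V\times\dot\beta)=-g(\dot\beta,\dot\beta)\,V$, and your observation that $\langle\langle\phi,\phi\rangle\rangle=\frac{1}{2}g(\dot\beta,\dot\beta)>0$ here (rather than $<0$), are details the paper leaves implicit but are consistent with its conventions.
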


\subsection{The Bj\"{o}rling Problem for spacelike surfaces}

The Bj\"{o}rling problem for spacelike surfaces has a unique solution and the proof is analogously to the case of the timelike surfaces. In this case the system of condition $iii)$ in Theorem~\ref{teo3.6} is equivalent to the elliptic system of partial differential equations (see \cite{ev}):

\begin{equation}\label{eqquaselinaere}
\frac{\partial^2 f_i}{\partial u ^2} + \frac{\partial^2 f_i}{\partial v ^2} + B_i\Big(\frac{\partial f_i}{\partial u} ,
\frac{\partial f_i}{\partial v}\Big) = 0,
\end{equation}
where $B_i$, $i=1,2,3$, contain at most first-order derivatives of the functions $f_i$.

So the Bj\"{o}rling problem may be interpreted as a Cauchy problem
involving quasilinear partial differentiable equations~\eqref{eqquaselinaere} with initial data:
$$
f_i(u,0) = \beta_i(u), \quad \quad (V(u) \times \dot{\beta}(u))_i =  -\Big(\frac{\partial f}{\partial v}(u,0)\Big)_i, \quad i=1,2,3.
$$
Therefore we can use again the Cauchy-Kovalevskaya Theorem (see \cite{ev}) to show that the problem has a unique solution with a initial data given by
$$\psi(u,0) = A^{-1}(\beta(u))\,\phi(u,0),$$
where
$$
\phi(u,0) = \frac{1}{2}(\dot{\beta}(u) + i\, V(u)\times \dot{\beta}(u)).
$$

\begin{thm}[\textbf{Bj\"{o}rling problem}]\label{PdeBforss}
Let $\beta: I \rightarrow M$ be a analytic spacelike curve in $M$ and $V : I \rightarrow TM$ a unitary
analytic timelike vector field along $\beta$, such that $g(\dot{\beta}, V)\equiv 0$. Then there exists a unique conformal spacelike minimal surface $$f: I \times (-\epsilon,\epsilon) = \Omega\subseteq\C \rightarrow M$$ such that:
\begin{enumerate}
\item[i.] $f(u,0) = \beta(u)$,
\item[ii] $N(u,0) = V(u)$,
\end{enumerate}
for all $u\in I$, where $N :\Omega\rightarrow TM$ is the Gauss map of the surface.
\end{thm}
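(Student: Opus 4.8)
The plan is to mirror the argument of Theorem~\ref{PdeBdot-t}, replacing the paracomplex algebra $\L$ by the complex numbers $\C$ and the hyperbolic system by the elliptic one~\eqref{eqquaselinaere}. Fix the isothermal coordinate $z = u + i\,v$ on $\Omega = I\times(-\epsilon,\epsilon)$ and consider the system
\begin{equation*}\left\{\begin{aligned}
\frac{\partial \psi_1}{\partial\bar{z}} + \sum_{j,k=1}^{3}L_{jk}^1\bar{\psi_j}\psi_k &= 0,\\
\frac{\partial \psi_2}{\partial\bar{z}} + \sum_{j,k=1}^{3}L_{jk}^2\bar{\psi_j}\psi_k &= 0,
\end{aligned}\right.\end{equation*}
with $\psi_3^2 = \psi_1^2 + \psi_2^2$ and $\psi_i:\Omega\to\C$. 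Since the associated second-order system is elliptic, no real analytic hypersurface is characteristic; in particular $\beta$ is never characteristic, so --- the data being analytic --- the Cauchy--Kovalevskaya theorem produces a unique local analytic solution once the Cauchy data on $\{v=0\}$ is prescribed. (This is in fact simpler than the timelike case, where characteristic lines had to be excluded.)

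Next I would pin down the initial data from the geometric requirements. If $f$ solves the problem then $\frac{\partial f}{\partial u}(u,0) = \dot\beta(u)$ and, from the prescribed normal, $\frac{\partial f}{\partial v}(u,0) = -\,V(u)\times\dot\beta(u)$; hence
\begin{equation*}
\phi(u,0) = \frac{1}{2}\Big(\frac{\partial f}{\partial u} - i\,\frac{\partial f}{\partial v}\Big)(u,0) = \frac{1}{2}\big(\dot\beta(u) + i\,V(u)\times\dot\beta(u)\big),
\end{equation*}
and I set $\psi(u,0) = A^{-1}(\beta(u))\,\phi(u,0)$, with $A$ as in~\eqref{eq:3.1}. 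This is precisely the Cauchy datum feeding the system above.

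I would then verify that the resulting $\psi$ meets every hypothesis of the converse part of Theorem~\ref{teo3.6}. Condition ii) holds by the definition $\psi_3^2=\psi_1^2+\psi_2^2$, and Lemma~\ref{lem5.2} upgrades the two prescribed equations to the full condition iii) for $\psi_3$. For the nondegeneracy i), I compute along $\{v=0\}$, using that $(.,.)$ is the $\C$-bilinear extension of $g$ and that $\dot\beta$ and $V\times\dot\beta$ are real,
\begin{equation*}
\langle\langle\phi(u,0),\phi(u,0)\rangle\rangle = \frac{1}{4}\big[g(\dot\beta,\dot\beta) + g(V\times\dot\beta, V\times\dot\beta)\big].
\end{equation*}
By~\eqref{eq:5.1} one has $g(V\times\dot\beta, V\times\dot\beta) = -\,g(V,V)\,g(\dot\beta,\dot\beta)$, and here the decisive point is that $V$ is \emph{timelike}, so $g(V,V)=-1$ and this term equals $g(\dot\beta,\dot\beta)$. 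Since $\beta$ is spacelike, $\langle\langle\phi(u,0),\phi(u,0)\rangle\rangle = \frac{1}{2}\,g(\dot\beta,\dot\beta) > 0$. Shrinking $\Omega$ if necessary, positivity persists on all of $\Omega$; and because $A$ is the Jacobian of a left translation, hence an isometry, $|\psi_1|^2+|\psi_2|^2-|\psi_3|^2 = \langle\langle\phi,\phi\rangle\rangle > 0$. Thus Theorem~\ref{teo3.6} yields a well-defined conformal minimal immersion whose induced metric is positive definite, i.e. spacelike.

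Finally I would confirm the boundary conditions and globalize. The formula $f_a = 2\Re\int\sum_b A_{ab}\psi_b\,dz$, with base point chosen over $u\in I$, gives $f(u,0)=\beta(u)$; for the Gauss map, at $v=0$ the identities~\eqref{eq:5.1} give $\frac{\partial f}{\partial u}\times\frac{\partial f}{\partial v}\big|_{v=0} = \dot\beta\times(-V\times\dot\beta) = g(\dot\beta,\dot\beta)\,V$, a positive multiple of the unit vector $V$, whence $N(u,0)=V(u)$ with the correct orientation. Local uniqueness is inherited from Cauchy--Kovalevskaya, and taking $I$ compact one covers it by finitely many coordinate neighbourhoods and patches the local solutions by uniqueness, exactly as in Theorem~\ref{PdeBdot-t}. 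The main obstacle I anticipate is the sign bookkeeping in the nondegeneracy computation: it is precisely the timelike character of $V$ (rather than spacelike, as for timelike surfaces) that flips the sign of $g(V\times\dot\beta,V\times\dot\beta)$ so that $\langle\langle\phi,\phi\rangle\rangle$ comes out positive, guaranteeing a genuinely spacelike surface and fixing the orientation of $N$.
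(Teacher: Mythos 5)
Your proposal is correct and follows essentially the same route as the paper, which treats this case only as a sketch ("analogous to the timelike case") with exactly your Cauchy data $\psi(u,0)=A^{-1}(\beta(u))\,\phi(u,0)$, $\phi(u,0)=\tfrac12\big(\dot\beta(u)+i\,V(u)\times\dot\beta(u)\big)$, and an appeal to Cauchy--Kovalevskaya for the elliptic system~\eqref{eqquaselinaere}. The details you supply beyond the paper's sketch --- the sign computation $g(V\times\dot\beta,V\times\dot\beta)=-g(V,V)\,g(\dot\beta,\dot\beta)=g(\dot\beta,\dot\beta)$ giving $\langle\langle\phi,\phi\rangle\rangle>0$, the Gauss map check via~\eqref{eq:5.1}, and the compactness patching --- are all accurate and consistent with the paper's timelike argument.
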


Now we will construct some examples of minimal surfaces in the Heisenberg group $\h_3$, in the de Sitter Space $\mathbb{S}_1^3$ and in the space $\mathbb{H}^2\times\mathbb{R}$.

\section{The Lorentzian Heisenberg group $\h_3$}\label{lorentzianH}
We consider the Heisenberg group
$$
\h_3 = \Bigg{\{}\left[\begin{array}{cccl}
  1 & x & z + \frac{1}{2}xy \\
  0 & 1 &     y\\
  0 & 0 &     1
\end{array}\right]\;:\; x,y,z \in \r \Bigg{\}},
$$
equipped with the left-invariant Lorentzian metric given by
\begin{equation}
g = dx^2 + dy^2 - \Big{(}\frac{1}{2}y\,dx - \frac{1}{2}x\,dy + dz\Big{)}^2.\nonumber
\end{equation}
With respect to $g$, the left-invariant frame field given by
$$
E_1= \frac{\partial}{\partial x} - \frac{y}{2}\frac{\partial}{\partial z},\quad
E_2= \frac{\partial}{\partial y} + \frac{x}{2}\frac{\partial}{\partial z},\quad
E_3=  \frac{\partial}{\partial z},
$$
is orthonormal, $\{E_1,E_2\}$ are spacelike and $E_3$ is timelike. Also, the matrix $A$ is
\begin{equation}
A = \left[\begin{array}{cccl}
      1        &     0       & 0 \\
      0        &     1       & 0 \\
  -\frac{y}{2} & \frac{x}{2} & 1
\end{array}\right],\nonumber
\end{equation}
and the nonzero $L_{ij}^k$ are $L_{12}^3 = L_{13}^2  = L_{31}^2 = \frac{1}{2}$ and
$L_{21}^3 = L_{32}^1  = L_{23}^1 = - \frac{1}{2}$.
Consequently the system~\eqref{eq:5.4} becomes
\begin{equation}\label{eq:5.6} \left\{\begin{aligned}
\frac{\partial\psi_1}{\partial\bar{z}} &- \Re(\bar{\psi_3}\psi_2) = 0,\\
\frac{\partial\psi_2}{\partial\bar{z}} &+ \Re(\bar{\psi_3}\psi_1) = 0.
\end{aligned}\right.
\end{equation}
Then the coordinates components of the minimal immersion $f$ are given by
\begin{equation}\label{eq:5.5} \left\{\begin{aligned}
f_1 &= 2\,\Re\int \psi_1\, dz,\\
f_2 &= 2\,\Re\int \psi_2 \,dz,\\
f_3 &= 2\,\Re\int \Big(\frac{f_1}{2}\psi_2 -\frac{f_2}{2}\psi_1 + \psi_3\Big) \,dz.
\end{aligned}\right.
\end{equation}
So, knowing the $\psi_i,\, i=1,2,3$, that are solutions of a constant coefficients PDE, we can compute $f_1,f_2$ by integration and, substituting in the third equation of \eqref{eq:5.5}, we can compute $f_3$ by direct integration (see Remark~\ref{conservation}).

\begin{ex}[The timelike vertical plane $y = c$, timelike case]\label{Pvertical-t-t}
First of all, we consider the curve  $$\beta(u) = (\cosh u, c, -\frac{c}{2}\cosh u + \sinh u), \qquad u\in\r,\quad c\in\r,$$
and the unit vector field $V(u) = E_2(\beta(u))$.
Since
$$
\dot{\beta}(u) = \sinh u\,E_1 + \cosh u\,E_3,
$$
then $g(\dot{\beta},\dot{\beta}) = -1$. Moreover, $g(V,V) = 1$ and $g(\dot{\beta}, V) = 0$.
Thus we have a {\em timelike Bj\"{o}rling problem}.
As
$$E_2\times E_1 = E_3\quad \mbox{and}\quad E_2\times E_3 = E_1,$$
we have that $V(u)\times\dot{\beta}(u) = \cosh u\,E_1+\sinh u\,E_3 $. So
\begin{equation}
\phi(u,0) = \frac{1}{2}[(\sinh u + \tau\cosh u)\,E_1 + (\cosh u + \tau\sinh u)\,E_3].\nonumber
\end{equation}
Consequently,
\begin{equation}\label{eq:5.7}
\psi(u,0) = A^{-1}(\beta(u))\phi(u,0)
          = \frac{1}{2}(\sinh u + \tau\cosh u, 0, \cosh u + \tau\sinh u).
\end{equation}
Therefore the solution of system \eqref{eq:5.6}, which satisfy the initial condition \eqref{eq:5.7}, is
\begin{equation}\left\{\begin{aligned}\label{eq:5.8}
\psi_1(u,v) &= \frac{e^v}{2}(\sinh u + \tau\cosh u),\\
\psi_2(u,v) &= 0,\\
\psi_3(u,v) &= \frac{e^v}{2}(\cosh u + \tau\sinh u).
\end{aligned}\right.
\end{equation}
Furthermore, \eqref{eq:5.8} satisfy the conditions of Theorem~\ref{PdeBdot-t}. Then, we obtain the conformal minimal timelike immersion
$$
f(u,v) = \big{(}e^v\,\cosh u ,c, e^v\,(-\frac{c}{2}\cosh u + \sinh u)\big{)}$$
that is a timelike vertical plane $y=c$, and it represents the solution of the Bj\"{o}rling problem for the given pair $(\beta,V)$.
\end{ex}

\begin{ex}[Helicoids]
Consider $\beta(u) = (\rho(u), 0, b)$, $b\in\r$  and
$$V(u) = \frac{\rho^2(u) -2c}{2\rho'(u)}\,E_2 - \frac{\rho(u)}{\rho'(u)}\,E_3,\quad u\in (a,d)\subset\r, \quad c\in\r,$$
where $\rho$ is a real function satisfying
\begin{equation}
\sqrt{(\rho')^2 + \rho^2} = \frac{\rho^2}{2} - c.\nonumber
\end{equation}
As $\dot{\beta}(u) = \rho'(u)\,E_1$, then $g(\dot{\beta},\dot{\beta}) = \rho'^2$.
Moreover $$g(V,V) = 1,\qquad g(\dot{\beta},V) = 0.$$ Thus we have a {\em spacelike
Bj\"{o}rling problem}.
Since $E_2\times E_1 = E_3$ and $E_3\times E_1 =  E_2$, we obtain
\begin{equation}
V(u)\times\dot{\beta}(u) = \frac{\rho^2(u) - 2c}{2}\,E_3 - \rho(u)\, E_2.
\end{equation}
Then
$$
\phi(u,0) = \Big(\frac{\rho'(u)}{2}, \frac{\tau\,\rho(u)}{2},
\frac{c\,\tau}{2}\Big)$$
and, so, it follows that
\begin{equation}\label{eq:5.11}
\psi(u,0) = \Big(\frac{\rho'(u)}{2}, \frac{\tau\,\rho(u)}{2}, - \frac{\tau(\rho^2(u) - 2c)}{4}\Big).
\end{equation}
Therefore the solution of \eqref{eq:5.6}, which satisfy the initial condition \eqref{eq:5.11}, is
\begin{equation}\left\{\begin{aligned}\label{eq:5.12}
\psi_1(u,v) &= \frac{1}{2}(\rho'(u)\,\cos v - \tau\rho(u)\,\sin v),\\
\psi_2(u,v) &= \frac{1}{2}(\rho'(u)\,\sin v + \tau\rho(u)\,\cos v),\\
\psi_3(u,v) &= -\frac{\tau}{4}(\rho^2(u) - 2c).
\end{aligned}\right.
\end{equation}
Furthermore, \eqref{eq:5.12} satisfy the conditions of Theorem~\ref{PdeBdot-e}. Then, integrating we obtain the solution to the Bj\"{o}rling problem:
\begin{equation}
f(u,v) = (\rho(u)\,\cos v,\rho(u)\,\sin v,c\,v + b),\nonumber
\end{equation}
which represents a timelike helicoid if $c\neq 0$, and the horizontal plane $z=b$, if $c=0$.
\end{ex}

\begin{ex}[The saddle-type surface]
Consider $\beta(u) = (4\,c\,u, -4\,Q(0), -8\,c\,u\,Q(0))$ and
$$V(u) = -\dfrac{4\,c\,Q(0)}{Q'(0)}\,E_1 + \dfrac{c}{Q'(0)}\,E_3, \quad u\in (a,b)\subset\r, \quad c\in\r$$
where $Q(v)$ is a real differential function with $Q'(v)\neq 0$, for all $v\in\r$, which satisfy
$$4\,c\,Q(v) = \sqrt{Q'(v)^2 + c^2}.$$
As $$\dot{\beta}(u) = 4\,c\,E_1 - 16\,c\,Q(0)\,E_3,$$ then $g(\dot{\beta}(u),\dot{\beta}(u)) = -16\,Q'(0)^2$. Moreover $g(V(u),V(u)) = 1$ and $g(\dot{\beta}(u),V(u)) = 0$. Thus we have a {\em timelike Bj\"{o}rling problem}. Since
$$V(u)\times\dot{\beta}(u) = -4\,Q'(0)\,E_2,$$
then
$$\phi(u,0) = \big{(}2\,c,-2\,Q'(0)\tau, -4\,c\,Q(0) -4\,a\,u\,Q'(0)\tau\big{)}$$
and, so, it follows that
\begin{equation}\label{eq:5.13}
\psi(u,0) = \big{(}2\,c, -2\,Q'(0)\tau, -8\,c\,Q(0)\big{)}.
\end{equation}
Therefore the solution of \eqref{eq:5.6}, which satisfy the initial condition \eqref{eq:5.13} is
\begin{equation}\left\{\begin{aligned}\label{eq:5.14}
\psi_1(u,v) &= 2\,c,\\
\psi_2(u,v) &= -2\,\tau\,Q'(v),\\
\psi_3(u,v) &= -8\,c\,Q(v).
\end{aligned}\right.
\end{equation}
Furthermore, \eqref{eq:5.14} satisfy the conditions of the Theorem~\ref{PdeBdot-t}. Then, integrating we obtain the
$$f(u,v) = (4\,c\,u,-4\,Q(v),-8\,c\,u\,Q(v)),$$
which the image lies on the graph of the function $z = \dfrac{1}{2}x\,y$.
\end{ex}

\section{The de Sitter Space $\mathbb{S}_1^3$}\label{sitter}

The de Sitter space $\mathbb{S}_1^3$ might be modeled as the halfspace
$$
\r_{+}^3 = \{(x_1,x_2,x_3) \in \r^3\;: \;x_3 > 0\}$$
endowed with the left-invariant Lorentzian metric given by
$$
g = \frac{1}{x_3^2}(dx_1^2 + dx_2^2 - dx_3^2).
$$
An orthonormal basis of left-invariant vector fields is given by
$$
E_1 = x_3\frac{\partial}{\partial x_1},\qquad
E_2 = x_3\frac{\partial}{\partial x_2},\qquad
E_3 = x_3\frac{\partial}{\partial x_3},
$$
where $\{E_1,E_2\}$ are spacelike and $E_3$ timelike. Then
\begin{equation}
A = \left[\begin{array}{cccl}
 x_3 &  0  &  0 \\
  0  & x_3 &  0 \\
  0  &  0  & x_3
\end{array}\right].\nonumber
\end{equation}
The only $L_{ij}^k$ nonzero are $L_{13}^1 = L_{23}^2  = L_{11}^3 = L_{22}^3 = -1$. So \eqref{eq:5.4} becomes
\begin{equation}\label{eq:5.15} \left\{\begin{aligned}
\frac{\partial\psi_1}{\partial\bar{z}} &- \bar{\psi_1}\psi_3 = 0, \\
\frac{\partial\psi_2}{\partial\bar{z}} &- \bar{\psi_2}\psi_3 = 0.
\end{aligned}\right.
\end{equation}
Therefore the conformal minimal immersion $f$ is given by
\begin{equation}\label{eq:5.16} \left\{\begin{aligned}
f_1 &= 2\,\Re\int f_3\,\psi_1\, dz,\\
f_2 &= 2\,\Re\int f_3\,\psi_2\, dz,\\
f_3 &= \exp\Big({2\,\Re\int\psi_3\, dz}\Big).
\end{aligned}\right.
\end{equation}
Again, knowing the $\psi_i,\, i=1,2,3$, we can compute $f_3$ by direct integration and, substituting in the first two equations, we can also compute $f_1,f_2$ by direct integration (see, again, Remark~\ref{conservation}).

\begin{ex}[Timelike vertical plane $y=c$, the  spacelike case]
Consider $$\beta(u) = (\sinh u, c, \cosh u), \quad u\in\r $$ and $V(u) = E_2$.
We have that $$\dot{\beta}(u) = E_1 + \frac{\sinh u}{\cosh u}\,E_3,\qquad
g(\dot{\beta}(u),\dot{\beta}(u)) = \frac{1}{\cosh^2(u)}>0.$$ Furthermore, $g(V,V) = 1$ and $g(\dot{\beta},V) = 0$.
Thus the pair $(\beta,V)$ produces a {\em spacelike Bj\"{o}rling problem}.
Since $$V(u)\times\dot{\beta}(u) = E_3 + \frac{\sinh u}{\cosh u}\,E_1 = (\sinh u, 0, \cosh u),$$
we obtain that
$$
\phi(u,0) = \frac{1}{2}(\dot{\beta}(u) - \tau\, V(u)\times\dot{\beta}(u))
          = \frac{1}{2}( \cosh u - \tau\sinh u, 0, \sinh u - \tau\cosh u).
$$
Therefore
\begin{equation}
\psi(u,0) = A^{-1}(\beta(u))\phi(u,0)
          = \frac{1}{2}\bigg{(}\frac{\cosh u - \tau\sinh u}{\cosh u}, 0, \frac{\sinh u
          - \tau\cosh u}{\cosh u}\bigg{)}.\nonumber
\end{equation}
As $\psi(u,0)$ is a solution of \eqref{eq:5.15}, the uniqueness implies that $\psi(u,v) = \psi(u,0)$.
Therefore the conformal minimal timelike immersion, which contains $\beta(u)$, is given by
\begin{equation}
f(u,v) = (e^{-v}\,\sinh u, c, e^{-v}\,\cosh u).\nonumber
\end{equation}
\end{ex}

\begin{ex}
Consider $$\beta(u) = \bigg{(}\dfrac{1}{\sqrt{2}}\sinh u, \dfrac{1}{\sqrt{2}}\sinh u, \cosh u\bigg{)},\qquad V(u) = -\dfrac{E_1}{\sqrt{2}} + \dfrac{E_2}{\sqrt{2}}.$$
As
$$\dot{\beta}(u) = \dfrac{E_1}{\sqrt{2}} + \dfrac{E_2}{\sqrt{2}} + \frac{\sinh u}{\cosh u}\,E_3$$
and
$$g(\dot{\beta}(u),\dot{\beta}(u)) = \frac{1}{\cosh^2(u)},$$
then we have a {\em spacelike Bj\"{o}rling problem}. Also,
$$g(V(u),V(u)) = 1,\qquad g(\dot{\beta}(u),V(u)) = 0$$
and $$V(u)\times\dot{\beta}(u) =  \bigg{(}\dfrac{1}{\sqrt{2}}\sinh u, \dfrac{1}{\sqrt{2}}\sinh u, \cosh u\bigg{)}.$$
Therefore
$$
\phi(u,0) = \frac{1}{2}\bigg{(}\frac{1}{\sqrt{2}}(\cosh u - \tau\sinh u),\frac{1}{\sqrt{2}}(\cosh u - \tau\sinh u) , \sinh u - \tau\cosh u\bigg{)}$$
and
\begin{equation}
\psi(u,0) = A^{-1}(\beta(u))\phi(u,0)
          = \frac{1}{2}\bigg{(}\frac{\cosh u - \tau\sinh u}{\sqrt{2}\cosh u}, \frac{\cosh u - \tau\sinh u}{\sqrt{2}\cosh u}, \frac{\sinh u - \tau\cosh u}{\cosh u}\bigg{)}.\nonumber
\end{equation}
Since $\psi(u,0)$ is solution of \eqref{eq:5.15}, the uniqueness implies that $\psi(u,v) = \psi(u,0)$.
Integrating \eqref{eq:5.16}, it results that
$$
f(u,v) = e^{-v}\Big(\dfrac{\sinh u}{\sqrt{2}}, \dfrac{\sinh u}{\sqrt{2}}, \cosh u\Big).
$$
\end{ex}

\section{The space $\mathbb{H}^2\times\mathbb{R}$}\label{H2xR}

Let hyperbolic space $\mathbb{H}^2$ be modeled as halfspace
$$
\r_{+}^2 = \{(x_1,x_2) \in \r^2\;: \;x_2 > 0\}$$
endowed with the left-invariant metric given by
$$
g_{\mathbb{H}} = \frac{1}{x_2^2}(dx_1^2 + dx_2^2).
$$
The space $\mathbb{H}^2\times\mathbb{R}$ is a Lie group and with the product structure and endowed with the left-invariant metric given by
$$
g = \frac{1}{x_2^2}(dx_1^2 + dx_2^2) - dx_3^2.
$$
An orthonormal basis of left-invariant vector fields is given by
$$
E_1 = x_2\frac{\partial}{\partial x_1},\qquad
E_2 = x_2\frac{\partial}{\partial x_2},\qquad
E_3 = \frac{\partial}{\partial x_3},
$$
where $\{E_1,E_2\}$ are spacelike and $E_3$ timelike. Then
\begin{equation}
A = \left[\begin{array}{cccl}
 x_2 &  0  &  0 \\
  0  & x_2 &  0 \\
  0  &  0  & 1
\end{array}\right].\nonumber
\end{equation}
The only $L_{ij}^k$ nonzero are $L_{12}^1 = -2$ and $L_{11}^2 = 2$. So \eqref{eq:5.4} becomes
\begin{equation}\left\{\begin{aligned}\label{epshipReq}
\frac{\partial\psi_1}{\partial\bar{z}} &- \psi_1\bar{\psi_2} = 0,\\
\frac{\partial\psi_2}{\partial\bar{z}} &+ \bar{\psi_1}\psi_1 = 0.\\
\end{aligned}\right.
\end{equation}
Therefore the conformal minimal immersion $f$ is given by
\begin{equation}\left\{\begin{aligned}
f_1(u,v) &= 2\,\Re\int f_2\,\psi_1\, dz,\\
f_2(u,v) &= \exp\Big(2\,\Re\int\psi_2\, dz\Big),\\
f_3(u,v) &= 2\,\Re\int \psi_3\, dz.
\end{aligned}\right.\nonumber
\end{equation}
Again, knowing the $\psi_i,\, i=1,2,3$, we can compute $f_2$ and $f_3$ by direct integration and, substituting in the first equation, we can also compute $f_1$ by direct integration (see, again, Remark~\ref{conservation})

We observe that if $\psi_2$ is a $\mathbb{L}$-differentiable (or holomorphic) function the it follows from \eqref{epshipReq} that $\psi_1\bar{\psi_1} = 0$ and $\psi_1\dfrac{\partial\psi_1}{\partial\bar{z}} = 0$. We may for example $\psi_1 =0$, which corresponds to planes $x_1 = {\text cte}$.

\begin{ex}[Spacelike horizontal plane $z=c$]
Consider $$\beta(u) = (\cos u, \sin u, c), \quad u\in(0,\pi)$$ and $V(u) = E_3$.
We have that $$\dot{\beta}(u) = -E_1 + \frac{\cos u}{\sin u}\,E_2,\qquad
g(\dot{\beta}(u),\dot{\beta}(u)) = \frac{1}{\sin^2(u)}>0.$$ Furthermore, $g(V,V) = -1$ and $g(\dot{\beta},V) = 0$.
Thus the pair $(\beta,V)$ produces a  Bj\"{o}rling problem for spacelike surfaces.
As
 $$V(u)\times\dot{\beta}(u) = -E_2 - \frac{\cos u}{\sin u}\,E_1 = (-\cos u,-\sin u, 0),$$
we obtain that
$$
\phi(u,0) = \frac{1}{2}(\dot{\beta}(u) + i\, V(u)\times\dot{\beta}(u))
          = \frac{1}{2}( -\sin u - i\cos u, \cos u - i\sin u, 0).
$$
Therefore
\begin{equation}
\psi(u,0) = A^{-1}(\beta(u))\phi(u,0)
          = \frac{1}{2}\bigg{(}-\frac{\sin u + i\cos u}{\sin u}, \frac{\cos u - i\sin u}{\sin u}, 0\bigg{)}.\nonumber
\end{equation}
Since $\psi(u,0)$ is a solution of \eqref{epshipReq}, the uniqueness implies that $\psi(u,v) = \psi(u,0)$.
Therefore the conformal minimal spacelike immersion, which contains $\beta(u)$, is given by
\begin{equation}
f(u,v) = (e^{v}\,\cos u, e^{v}\,\sin u, c).\nonumber
\end{equation}
\end{ex}


\begin{thebibliography}{10}

\bibitem{ACM} L.J. Alías, R.M.B. Chaves, P. Mira, {\em Bj\"{o}rling problem for maximal surfaces in Lorentz-Minkowski space}, Math. Proc. Cam. Phil. Soc. {\bf 134} (2) (2003), 289-316.

\bibitem{AV} A. Asperti, J.M. Vilhena, {\em Bj\"{o}rling problem for spacelike, zero mean curvature
surfaces in $\mathbb{L}^4$}, J. Geom. Phys. {\bf 56} (2) (2006), 196–-213.

\bibitem{B} E. G. Bj\"{o}rling,
   {\em In integrazionem aequationis derivatarum partialum superfici
cujus inpuncto uniquoque principales ambos radii curvedinis aequales sunt
sngoque contrario}, Arch. Math. Phys.
   {\bf 4} (1) (1844), 290--315.

\bibitem{CDM} R.M.B. Chaves, M.P. Dussan, M. Magid,
   {\em Bj\"{o}rling problem for timelike surfaces in the
Lorentz-Minkowski space}, J. Math. Anal. Appl. {\bf 337} (2011),
   481--494.

\bibitem {ev} L.C. Evans, Partial Differential Equations, AMS (1998).

\bibitem{Kob} O. Kobayashi, {\em Maximal surfaces in 3-dimensional Lorentz space $\L^3$}, Tokyo J. Math. {\bf 6} (1983), 297-309.

\bibitem{K} M.~Kokubu, {\em Weierstrass representation for minimal surfaces in hyperbolic space},  T\^{o}hoku Math.
J. {\bf 49} (1997), 367-377.

\bibitem{konderak}
J. Konderak. {\em A Weierstrass representation theorem for Lorentz surfaces}, Complex Var. Theory Appl. {\bf 50} (5), (2005), 319-332.

\bibitem{Liramm} J.H. Lira, M. Melo, F. Mercuri, {\em A Weierstrass representation for Minimal Surfaces in
3-Dimensional Manifolds}, Results. Math. {\bf 60} (2011), 311-323.


\bibitem{MMP} F.~Mercuri, S.~Montaldo, P.~Piu, {\em Weierstrass representation formula of minimal surfaces in $\h_3$
and $\h^2\times\r$}, Acta Math. Sinica {\bf 22} (2006), 1603-1612.

\bibitem{MO} F. Mercuri, Irene I. Onnis,
   {\em On the Bj{\"o}rling problem in a 3-dimensional Lie group}, Illinois J. Math,
  {\bf 53} (2), (2009), 431-440.


\bibitem{P} I.G.~Petrovsky, Lectures on Partial Differential Equations,
Interscience Publishers, New York (1954).

\bibitem{S} H.A.~Schwarz, Gesammelte Mathematische Abhandlungen, Band {\bf I}, Springer Berlin (1890).


\end{thebibliography}
\end{document}